\newtheorem{thm}{Theorem}[section]
\newtheorem{cor}[thm]{Corollary}
\newtheorem{lem}[thm]{Lemma}
\newtheorem{prop}[thm]{Proposition}
\theoremstyle{definition}
\newtheorem{rem}[thm]{Remark}
\newtheorem{ex}[thm]{Example}
\numberwithin{equation}{section}
\newcommand{\R}{\ensuremath{\mathbb R}}    % Reelle Zahlen
\newcommand{\C}{\ensuremath{\mathbb C}}    % Komplexe Zahlen
\newcommand{\N}{\ensuremath{\mathbb N}}    % Nat"urliche Zahlen
\newcommand{\product}{[\cdot\,,\cdot]}
\newcommand{\calD}{\mathcal D}
\newcommand{\calS}{\mathcal S}
\newcommand{\la}{\lambda}
\newcommand{\veps}{\varepsilon}
\newcommand{\vphi}{\varphi}
\newcommand{\bmat}{\begin{pmatrix}}
\newcommand{\emat}{\end{pmatrix}}
\newcommand{\mat}[4]
{
   \begin{pmatrix}
      #1 & #2\\
      #3 & #4
   \end{pmatrix}
}
\newcommand{\vek}[2]
{
   \begin{pmatrix}
      #1\\
      #2
   \end{pmatrix}
}
\renewcommand{\Im}{\operatorname{Im}}
\renewcommand{\Re}{\operatorname{Re}}
\newcommand{\dom}{\operatorname{dom}}
\newcommand{\ol}{\overline}
\newcommand{\AC}{\operatorname{AC}}
\newcommand{\sgn}{\operatorname{sgn}}
\begin{document}
%%%%%%%%%%%%%%%%%%%%%%%%%%%%%%%%%%%%%%%%%%%%%%%%%%%%%%%%%%%%%%%%%%%%%%%%%%%%%
%%%
%%%  HEAD OF PAPER
%%%
\title[Non-real eigenvalues of indefinite Sturm-Liouville problems]{Estimates on the non-real eigenvalues of
regular indefinite Sturm-Liouville problems}

\author{Jussi Behrndt}
\address{Technische Universit\"{a}t Graz, Institut f\"{u}r Numerische Mathematik, Steyrergasse 30, 8010 Graz, Austria}
\email{behrndt@tugraz.at}
\urladdr{www.math.tugraz.at/~behrndt/}

\author{Shaozhu Chen}
\address{Department of Mathematics, Shandong University (Weihai), Weihai 264209, P.R., China}
\email{szchen@sdu.edu.cn}
\urladdr{}

\author{Friedrich Philipp}
\address{Institut f\"ur Mathematik, Technische Universit\"at Clausthal, Erzstra\ss e 1, 38678 Clausthal-Zellerfeld, Germany}
\email{fmphilipp@gmail.com}
\urladdr{www.tu-clausthal.de/~fph12}

\author{Jiangang Qi}
\address{Department of Mathematics, Shandong University (Weihai), Weihai 264209, P.R., China}
\email{qjg816@163.com}
\urladdr{}

%%%%%%%%%%%%%%%%%%%%%%%%%%%%      ABSTRACT      %%%%%%%%%%%%%%%%%%%%%%%%%%%%%
\begin{abstract}
Regular Sturm-Liouville problems with indefinite weight functions may possess finitely many non-real eigenvalues.
In this note we prove explicit bounds on the real and imaginary parts of these eigenvalues in terms
of the coefficients of the differential expression.

%For a long time been an open problem to find explicit bounds in terms of the coefficients and the weight.
%Here, we solve this problem for arbitrary boundary conditions and coefficients under a weak assumption on the weight.
\end{abstract}
%%%%%%%%%%%%%%%%%%%%%%%%%%%%%%%%%%%%%%%%%%%%%%%%%%%%%%%%%%%%%%%%%%%%%%%%%%%%%

\subjclass[2010]{Primary 34B24, 34L15; Secondary 47E05, 47B50}

\keywords{Sturm-Liouville equation, indefinite weight, non-real eigenvalue}

\maketitle

\section{Introduction}

In this paper we consider regular {\it indefinite} Sturm-Liouville eigenvalue problems of the form
\begin{equation}\label{e:SLEq}
\tau(f)=\lambda f\quad\text{with}\quad \tau=\frac{1}{w}\left(-\frac{d}{dx}\,p\,\frac{d}{dx}+q\right)
\end{equation}
on bounded intervals $(a,b)\subset\R$ with real coefficients $p^{-1},q,w\in L^1(a,b)$ such that $p> 0$ and $w\not=0$ a.e.\ on $(a,b)$.
The problem \eqref{e:SLEq} is supplemented with suitable boundary conditions at the endpoints $a$ and $b$.
The pecularity here is that the weight function $w$ is not assumed to be positive and for this reason the eigenvalue
problem and the Sturm-Liouville differential expression $\tau$ in \eqref{e:SLEq} is called {\it indefinite}.

The history of indefinite Sturm-Liouville eigenvalue problems goes back to the early 20th century, where Haupt \cite{h}
and Richardson \cite{r} generalized oscillation results to the indefinite case, and noted that problems of the form \eqref{e:SLEq}
may have non-real eigenvalues. For more historical details and
other classical references we refer the reader to the interesting survey paper \cite{m2} by Mingarelli.
From a modern and more abstract point of view the spectral theory of Sturm-Liouville operators with indefinite weights is intimately
connected with spectral and perturbation theory of operators which are selfadjoint with respect to the indefinite inner product
\begin{equation}\label{novisad}
[f,g]:=\int_a^b f(x)\,\overline{g(x)}\, w(x)\, dx,
\end{equation}
where $f,g$ are functions in the weighted $L^2$-space $L^2_{\vert w\vert}(a,b)$.
The qualitative spectral properties of the selfadjoint differential operators associated to $\tau$ in the
Krein space $(L^2_{\vert w\vert}(a,b),\product)$ are well understood. We emphasize the
contribution \cite{cl} by \'Curgus and Langer in which many operator theoretic fundaments of the theory were laid. 
In particular, the spectrum of any selfadjoint realization consists of normal eigenvalues only.
There are at most finitely many non-real eigenvalues which appear in pairs symmetric with respect to the real axis, and
the real eigenvalues accumulate to $+\infty$ and $-\infty$. We refer to the monograph \cite{z} by Zettl for an overview and to
\cite{B85,BT07,BBW02,BLM04,BM10,BV01,KKM09,p} for some other aspects in indefinite Sturm-Liouville theory.

The main objective of this paper is to prove bounds on the non-real spectrum of indefinite Sturm-Liouville operators in
terms of the coefficients in the differential expression.
This is a challenging open problem according to Mingarelli~\cite{m2} and Kong, M\"{o}ller, Wu, and Zettl \cite[Remark 4.4]{KMWZ03}, see also 
\cite[Remark 11.4.1]{z}. 
Only in the very recent past first results in this direction were obtained by the authors of
this paper independently in \cite{bpt} jointly with Trunk for a particular singular problem, and in \cite{qc} for regular problems with Dirichlet
boundary conditions, special weight functions $w$ and $p=1$.

In this note we investigate the general regular case with arbitrary selfadjoint boundary conditions.
The only restriction on the weight $w$
is that we assume the existence of an absolutely continu\-ous function $g$ with $g^{\prime\, 2} p\in L^1(a,b)$ such that $\sgn(g) = \sgn(w)$ a.e.
In Theorem~\ref{t:main} and Theorem~\ref{t:main2} we then obtain bounds for the real and imaginary parts of the non-real eigenvalues
of the indefinite Sturm-Liouville eigenvalue
problem \eqref{e:SLEq} which depend on $p$, $q$, $g$ (and thus implicitly on $w$), and the selfadjoint boundary condition.
The techniques in the proofs of our main results are inspired by the methods in \cite{qc}.
For the case of a weight function with finitely many sign changes we construct an admissible function $g$ and find
bounds which do not depend on $g$ in Corollaries~\ref{c:ftp} and \ref{c:ftp2}. A particular weight function with infinitely many turning points
is treated in Example~\ref{auweia}.
Furthermore, for a certain set of real eigenvalues where the eigenfunctions have special sign properties (sometimes called real ghost states)
we obtain similar bounds as in Theorem~\ref{t:main} in Theorem~\ref{t:main3}.

The paper is organized as follows. After introducing the relevant notions in Section 2,
we prove the a priori bounds on the non-real eigenvalues of indefinite regular Sturm-Liouville operators in Section 3.
Section 4 contains the estimates on the real exceptional eigenvalues. A key ingredient in the proofs of
the results in Sections 3 and 4 are certain estimates on the norms of the corresponding eigenfunctions and their derivatives
in Lemmas \ref{lemest1}, \ref{lemest2}, and \ref{lemest3}. In order to improve the reading flow we
outsourced the proofs of these lemmas into the separate Section 5.

\section{Preliminaries}\label{s:always}

Let $\tau$ be the indefinite Sturm-Liouville expression from \eqref{e:SLEq} with real-valued coefficients
$p^{-1},q,w\in L^1(a,b)$ such that $p> 0$ and $w\not=0$ a.e.\ on $(a,b)$.
It will be assumed that both sets
$$\bigl\{x\in (a,b):w(x)>0\bigr\}\quad\text{and}\quad\bigl\{x\in (a,b):w(x)<0\bigr\}$$
have positive Lebesgue measure.
Let $L^2_{\vert w\vert}(a,b)$ be the linear space (of equivalence classes) of measurable functions $f:(a,b)\rightarrow\C$ such that
$f^2 w\in L^1(a,b)$ and equip this space with the indefinite inner product $\product$ in \eqref{novisad}.

The differential expression $\tau$ is then formally symmetric with respect to $\product$ and hence gives rise to selfadjoint realizations
in the Krein space $(L^2_{\vert w\vert}(a,b),\product)$, that is, $\tau$ induces a family of differential operators which are selfadjoint
with respect to the Krein space inner product $\product$. In the remainder of this paper {\it selfadjoint} refers to selfadjointness
with respect to this inner product.

Let us briefly recall how the selfadjoint realizations of $\tau$ can be para\-me\-trized; cf. \cite[Section 4.2]{z}. For this denote by $\calD_{\max}$ the maximal domain which consists of all $f\in L^2_{|w|}(a,b)$ such that $f,pf'$
are absolutely continuous and $\tau(f)\in L^2_{|w|}(a,b)$. Then any selfadjoint differential operator associated to $\tau$ in $(L^2_{\vert w\vert}(a,b),\product)$ is of the form
\begin{equation}\label{e:diffop}
A(\calD)f=\tau (f),\qquad \dom A(\calD) = \calD,
\end{equation}
where
$$
\calD = \calD_{\rm sep}(l,r) := \bigl\{f\in\calD_{\max} : (pf')(a) = lf(a),\;(pf')(b) = rf(b)\bigr\}
$$
with $l,r\in \R\cup\{\infty\}$ or
$$
\calD = \calD_{\rm coup}(\vphi,R) := \left\{f\in\calD_{\max} : \vek{f(b)}{(pf')(b)} = e^{i\vphi}R\vek{f(a)}{(pf')(a)}\right\}
$$
with $\vphi\in [0,2\pi)$ and $R\in\R^{2\times 2}$ such that $\det R = 1$. We note that $l=\infty$ or $r=\infty$ in
$\calD_{\rm sep}(l,r)$ stands for the Dirichlet boundary condition at $a$ or $b$, respectively. For brevity we shall refer to the above domains as {\it selfadjoint domains}. To any selfadjoint domain $\calD$ we assign a constant $c(\calD)\ge 0$ as follows:
\begin{equation}\label{e:cD}
c(\calD) :=
\begin{cases}
|l| + |r|             &\text{if $\calD = \calD_{\rm sep}(l,r)$ with $l,r\in\R$},\\
|r|                   &\text{if $\calD = \calD_{\rm sep}(\infty,r)$ with $r\in\R$},\\
|l|                   &\text{if $\calD = \calD_{\rm sep}(l,\infty)$ with $l\in\R$},\\
0                     &\text{if $\calD = \calD_{\rm sep}(\infty,\infty)$},\\
\frac{|r_{11}| + |r_{22}| + 2}{|r_{12}|}
                      &\text{if $\calD = \calD_{\rm coup}(\vphi,R)$ and $r_{12}\neq 0$},\\
|r_{11}r_{21}|        &\text{if $\calD = \calD_{\rm coup}(\vphi,R)$ and $r_{12} = 0$},
\end{cases}
\end{equation}
where the $r_{ij}$'s are the entries of the matrix $R = (r_{ij})_{i,j=1}^2\in\R^{2\times 2}$ in the case of coupled boundary conditions, i.e. $\calD = \calD_{\rm coup}(\vphi,R)$.

\begin{lem}\label{l:D}
Let $\calD$ be a selfadjoint domain and let $\phi\in\calD$. Then we have
\begin{equation}\label{e:sa}
\Im\big((p\phi')(b)\ol{\phi(b)} - (p\phi')(a)\ol{\phi(a)}\,\big) = 0,
\end{equation}
and, in addition,
\begin{equation}\label{e:wichtig}
\big|(p\phi')(b)\ol{\phi(b)} - (p\phi')(a)\ol{\phi(a)}\big|\le c(\calD)\max\bigl\{|\phi(a)|^2,|\phi(b)|^2\bigr\}.
\end{equation}
\end{lem}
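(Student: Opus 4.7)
The plan is to treat the two families of selfadjoint domains separately and, in each case, compute the boundary expression $B(\phi) := (p\phi')(b)\ol{\phi(b)} - (p\phi')(a)\ol{\phi(a)}$ explicitly to verify that it is real and to bound its modulus.

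For a separated domain $\calD_{\rm sep}(l,r)$ the calculation is immediate. If $l,r\in\R$ then $(p\phi')(a)=l\phi(a)$ and $(p\phi')(b)=r\phi(b)$ give $B(\phi)=r|\phi(b)|^2-l|\phi(a)|^2$, which is manifestly real and bounded in modulus by $(|l|+|r|)\max\{|\phi(a)|^2,|\phi(b)|^2\}$. The mixed Dirichlet cases $l=\infty$ and/or $r=\infty$ are obtained by simply dropping the corresponding term (since $\phi$ vanishes there), and this matches the definition of $c(\calD)$ in \eqref{e:cD}.

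For a coupled domain $\calD_{\rm coup}(\vphi,R)$ I would first handle the exceptional subcase $r_{12}=0$: the coupling then forces $\phi(b)=e^{i\vphi}r_{11}\phi(a)$ and $(p\phi')(b)=e^{i\vphi}(r_{21}\phi(a)+r_{22}(p\phi')(a))$, from which a short computation using $r_{11}r_{22}=\det R=1$ yields $B(\phi)=r_{11}r_{21}|\phi(a)|^2$, giving both claims at once. In the generic subcase $r_{12}\neq 0$ I would solve the coupling condition for $(p\phi')(a)$ and $(p\phi')(b)$ in terms of $\phi(a)$ and $\phi(b)$: substituting
\[
(p\phi')(a)=\frac{e^{-i\vphi}\phi(b)-r_{11}\phi(a)}{r_{12}},\qquad
(p\phi')(b)=\frac{r_{22}\phi(b)-e^{i\vphi}\phi(a)}{r_{12}},
\]
(the second following from the first together with $\det R=1$) into $B(\phi)$ leads, after collecting terms, to the identity
\[
B(\phi)=\frac{r_{11}|\phi(a)|^2+r_{22}|\phi(b)|^2-2\Re\bigl(e^{i\vphi}\phi(a)\ol{\phi(b)}\bigr)}{r_{12}}.
\]
This expression is visibly real, so \eqref{e:sa} holds, and the triangle inequality combined with $2|\phi(a)||\phi(b)|\le 2\max\{|\phi(a)|^2,|\phi(b)|^2\}$ immediately produces the bound $|B(\phi)|\le \tfrac{|r_{11}|+|r_{22}|+2}{|r_{12}|}\max\{|\phi(a)|^2,|\phi(b)|^2\}$, which is precisely $c(\calD)\max\{|\phi(a)|^2,|\phi(b)|^2\}$.

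The only step that requires a bit of care is the coupled generic case: one must use $\det R=1$ at the right moment to cancel what would otherwise be off-diagonal contributions involving $(p\phi')(a)\ol{\phi(a)}$ and its conjugate. Once that cancellation is recognized, no further estimates are needed beyond the triangle inequality. I do not anticipate any genuinely hard step; the result is essentially bookkeeping driven by the constraint $\det R=1$ and the definition of $c(\calD)$.
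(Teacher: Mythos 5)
Your proof is correct and follows essentially the same route as the paper: the same explicit formulas for the boundary form in the separated, coupled $r_{12}\neq 0$, and coupled $r_{12}=0$ cases, with the same use of $\det R=1$ and the same triangle-inequality bound matching the definition of $c(\calD)$. The only cosmetic difference is that you read off \eqref{e:sa} directly from these explicit real expressions, whereas the paper simply cites the selfadjointness of $A(\calD)$ for that identity.
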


\begin{proof}
The identity \eqref{e:sa} follows from the selfadjointness of $A(\calD)$.
We only show that \eqref{e:wichtig} holds in the case $\calD = \calD_{\rm coup}(\vphi,R)$. The other cases are evident.
Let $\phi\in\calD_{\rm coup}(\vphi,R)$. Then we have
\begin{equation}\label{e:eins}
\vek{\phi(b)}{(p\phi')(b)} = e^{i\vphi}\mat{r_{11}}{r_{12}}{r_{21}}{r_{22}}\vek{\phi(a)}{(p\phi')(a)},
\end{equation}
and hence, as $\det R = r_{11}r_{22} - r_{12}r_{21} = 1$, also
\begin{equation}\label{e:zwei}
\mat{r_{22}}{-r_{12}}{-r_{21}}{r_{11}}\vek{\phi(b)}{(p\phi')(b)} = e^{i\vphi}\vek{\phi(a)}{(p\phi')(a)}.
\end{equation}
From \eqref{e:eins} we get
$$
r_{12}(p\phi')(a) = e^{-i\vphi}\phi(b) - r_{11}\phi(a),
$$
and \eqref{e:zwei} yields
$$
r_{12}(p\phi')(b) = r_{22}\phi(b) - e^{i\vphi}\phi(a).
$$
Hence, if $r_{12}\neq 0$ then
$$
(p\phi')(b)\ol{\phi(b)} - (p\phi')(a)\ol{\phi(a)}
= \frac{r_{22}|\phi(b)|^2 + r_{11}|\phi(a)|^2 - 2\Re\big(e^{i\vphi}\phi(a)\ol{\phi(b)}\big)}{r_{12}}.
$$
This directly implies \eqref{e:wichtig}. If $r_{12} = 0$, then, first of all, $r_{11}r_{22} = 1$. Moreover, by \eqref{e:zwei} we have $r_{22}\phi(b) = e^{i\vphi}\phi(a)$, and from \eqref{e:eins} we get
$$
e^{-i\vphi}(p\phi')(b) = r_{21}\phi(a) + r_{22}(p\phi')(a).
$$
This yields
\begin{align*}
(p\phi')(b)\ol{\phi(b)} - (p\phi')(a)\ol{\phi(a)}
&= \big(r_{22}^{-1}(p\phi')(b)e^{-i\vphi} - (p\phi')(a)\big)\ol{\phi(a)}\\
&= \left(r_{22}^{-1}\left(r_{21}\phi(a) + r_{22}(p\phi')(a)\right) - (p\phi')(a)\right)\ol{\phi(a)}\\
&= r_{11}r_{21}|\phi(a)|^2,
\end{align*}
and \eqref{e:wichtig} follows.
\end{proof}

For the estimates on the non-real and exceptional eigenvalues in the next sections we need a set of norms.
If $r : (a,b)\to [0,\infty)$ is a measurable function we denote by $\mu_r$ the measure on $(a,b)$ with $d\mu_r = r\,dt$ and
define the weighted $L^2$-spaces as $L^2_r(a,b) := L^2((a,b),\mu_r)$; this is in accordance with $L^2_{\vert w\vert}(a,b)$ defined above.
The norm of $L^2_r(a,b)$ will be denoted by $\|\cdot\|_{r,2}$. As usual the $L^1$-norm and $L^\infty$-norm will be denoted by
$\|\cdot\|_1$ and $\|\cdot\|_\infty$, respectively.

We close this section with a simple observation which will be exploited in many of the proofs below. Let $\phi$ be a solution of the equation \eqref{e:SLEq}, i.e., $\phi,p\phi'\in\AC[a,b]$ and
\begin{equation}\label{ode}
-(p\phi^\prime)^\prime+q\phi=\lambda w\phi.
\end{equation}
Multiplying \eqref{ode} with $\ol\phi$ and using  $(p\phi'\ol\phi)' = (p\phi')'\ol\phi + p|\phi'|^2$ we obtain
\begin{equation}
\la w|\phi|^2 = -(p\phi^\prime)^\prime\ol\phi+q\vert \phi\vert^2 = - (p\phi'\ol\phi)' + p|\phi'|^2 + q|\phi|^2.
\end{equation}
Integration over $[x,b]\subset[a,b]$ gives
\begin{equation}\label{e:integration}
\la \int_x^b w|\phi|^2 = (p\phi')(x)\ol{\phi(x)} - (p\phi')(b)\ol{\phi(b)} + \int_x^b\big(p|\phi'|^2 + q|\phi|^2\big)
\end{equation}
and for the real and imaginary part we conclude
\begin{equation}\label{e:real}
(\Re\la)\int_x^b w|\phi|^2 = \Re\bigl((p\phi')(x)\ol{\phi(x)} - (p\phi')(b)\ol{\phi(b)}\,\bigr) + \int_x^b\big(p|\phi'|^2 + q|\phi|^2\big)
\end{equation}
and
\begin{equation}\label{e:im}
(\Im\la)\int_x^b w|\phi|^2 = \Im\bigl((p\phi')(x)\ol{\phi(x)} - (p\phi')(b)\ol{\phi(b)}\,\bigr).
\end{equation}

\vspace{.5cm}
\section{Bounds on non-real eigenvalues}
In this section we provide a priori bounds on the non-real eigenvalues of the selfadjoint realizations of the regular indefinite
Sturm-Liouville expression $\tau$; cf. Theorem~\ref{t:main} and Theorem~\ref{t:main2} below.
The following constants will be incorporated into these bounds.
\begin{equation}\label{ab}
\alpha:=c(\calD)+\Vert q_-\Vert_1,\;\;\beta:=\sqrt{\alpha\bigl(1 / \Vert p^{-1}\Vert_1 +\alpha\bigr)}+\alpha,\;\;
\gamma:=\sqrt{2\beta + 1/\Vert p^{-1}\Vert_1}.
\end{equation}
Here (and in the following), $q_-(x) := \min\{0,q(x)\}$, $x\in (a,b)$. Note that $\alpha$, $\beta$, and $\gamma$ only
depend on the chosen selfadjoint boundary conditions and the norms $\Vert q_-\Vert_1$, $\Vert p^{-1}\Vert_1$.
In particular, the constants $\alpha$, $\beta$, and $\gamma$ do not depend on the weight function $w$.

The following lemma is the first of three similar statements
which play a key role in the proofs of the eigenvalue estimates in this paper. Its proof can be found in Section~\ref{s:lemmas}.

\begin{lem}\label{lemest1}
Let $\calD$ be a selfadjoint domain. Then for all $\lambda\in\C\setminus\R$ and all solutions $\phi\in\calD$ of the equation {\rm\eqref{ode}} the following estimates hold:
\begin{equation*}
  \Vert\phi^\prime\Vert_{p,2}\leq \beta \Vert\phi\Vert_{\frac{1}{p},2}\qquad\text{and}\qquad\Vert\phi\Vert_\infty\leq \gamma\,
\Vert\phi\Vert_{\frac{1}{p},2}.
\end{equation*}
\end{lem}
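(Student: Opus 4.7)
The plan is to combine the orthogonality forced on $\phi$ by non-realness of $\lambda$ with two elementary interpolation-type inequalities, and then close the loop by solving a quadratic inequality. Throughout I write $u := \|\phi'\|_{p,2}$, $v := \|\phi\|_{1/p,2}$, $M := \|\phi\|_\infty$, and $P := \|p^{-1}\|_1$.

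First I apply \eqref{e:im} at $x=a$: since $\Im\lambda\neq 0$, Lemma~\ref{l:D} forces $\int_a^b w|\phi|^2 = 0$. Substituting this into \eqref{e:real} at $x=a$ kills the term on the left, leaving
\begin{equation*}
\int_a^b p|\phi'|^2 \;=\; \Re\bigl((p\phi')(b)\ol{\phi(b)} - (p\phi')(a)\ol{\phi(a)}\bigr) \;-\; \int_a^b q|\phi|^2.
\end{equation*}
Bounding the boundary contribution via \eqref{e:wichtig} and using $-q\le |q_-|$ pointwise gives
\begin{equation*}
u^2 \;\le\; \bigl(c(\calD) + \|q_-\|_1\bigr) M^2 \;=\; \alpha M^2.
\end{equation*}

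For the dual estimate, the fundamental theorem of calculus together with Cauchy--Schwarz (applied to the pair $|\phi|/\sqrt p$ and $\sqrt p\,|\phi'|$) yields
\begin{equation*}
|\phi(x)|^2 \;\le\; |\phi(y)|^2 + 2uv\qquad\text{for all }x,y\in[a,b].
\end{equation*}
Since the average of $|\phi(y)|^2$ against the probability measure $P^{-1}p(y)^{-1}\,dy$ equals $v^2/P$, some $y^{*}\in(a,b)$ realizes $|\phi(y^{*})|^2\le v^2/P$; choosing $y=y^{*}$ and taking the supremum over $x$ gives $M^2 \le v^2/P + 2uv$.

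Finally, substituting this into $u^2\le\alpha M^2$ produces the quadratic inequality $u^2 - 2\alpha v\,u - \alpha v^2/P \le 0$ in $u$. Solving it and using $\alpha + \sqrt{\alpha(\alpha + 1/P)} = \beta$ gives $u \le \beta v$, which is the first assertion. Feeding this back into $M^2 \le v^2/P + 2uv$ yields $M^2 \le v^2(1/P + 2\beta) = \gamma^2 v^2$, the second assertion. The one step that needs a touch of care is choosing the intermediate point $y^{*}$ so that $|\phi|$ is controlled by $v$ at some point; the rest is algebraic bookkeeping calibrated to produce precisely $\beta$ and $\gamma$.
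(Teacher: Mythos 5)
Your proposal is correct and follows essentially the same route as the paper's proof: the vanishing of $\int_a^b w|\phi|^2$, the energy bound $\|\phi'\|_{p,2}^2\le\alpha\|\phi\|_\infty^2$ via Lemma~\ref{l:D}, the interpolation inequality $\|\phi\|_\infty^2\le \|p^{-1}\|_1^{-1}\|\phi\|_{\frac1p,2}^2+2\|\phi'\|_{p,2}\|\phi\|_{\frac1p,2}$, and the closing quadratic. The only cosmetic difference is that you locate a point $y^*$ at which $|\phi(y^*)|^2$ is at most its weighted average, whereas the paper multiplies by $p^{-1}(y)$ and integrates in $y$; these are the same step.
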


The next theorem is the first main result of this section. It provides estimates on the real and
imaginary parts of the non-real eigenvalues of the operators $A(\calD)$ in \eqref{e:diffop} for any selfadjoint domain $\calD$.

\begin{thm}\label{t:main}
Let $\calD$ be a selfadjoint domain, let $\alpha$, $\beta$, $\gamma$ be as in \eqref{ab}, and
assume that there exists a real-valued function $g\in\AC[a,b]$ with $g(a) = g(b) = 0$ and $g'\in L^2_p(a,b)$ such that $gw > 0$ a.e.\ on $(a,b)$. Then, with $\veps > 0$ chosen such that
\begin{equation}\label{e:condi}
\mu_{\frac{1}{p}}\big(\{x\in [a,b] : p(x)g(x)w(x)<\veps\}\big)\,\le\,\frac{1}{2\gamma^2}\,,
\end{equation}
the following holds for all eigenvalues $\lambda\in\C\setminus\R$ of the operator $A(\calD)$:
\begin{equation}\label{e:im_est}
|\Im\la\,|\,\le\,\frac{2}{\veps}\,\beta\,\gamma\,\|g'\|_{p,2}
\end{equation}
and
\begin{equation}\label{e:re_est}
|\Re\la\,|\,\le\,\frac{2}{\veps}\,\bigl(\beta\,\gamma\,\|g'\|_{p,2} + (\beta^2 + \gamma^2\|q\|_1)\|g\|_\infty\bigr).
\end{equation}
\end{thm}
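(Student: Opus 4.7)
The plan is to multiply the pointwise identities \eqref{e:im} and \eqref{e:real} by $g'(x)$ and integrate over $[a,b]$, exploiting the boundary conditions $g(a)=g(b)=0$ to turn the left-hand side into a weighted integral with the nonnegative integrand $gw|\phi|^2$. A Fubini interchange gives
$$
\int_a^b g'(x)\int_x^b w|\phi|^2\,dt\,dx \;=\; \int_a^b g(t)w(t)|\phi(t)|^2\,dt,
$$
while on the right the boundary term $(p\phi')(b)\overline{\phi(b)}$ disappears because $\int_a^b g'\,dx=g(b)-g(a)=0$. Thus \eqref{e:im} yields
$$
(\Im\la)\int_a^b gw|\phi|^2\,dx \;=\; \Im\int_a^b g'(x)(p\phi')(x)\overline{\phi(x)}\,dx,
$$
and a parallel identity derived from \eqref{e:real} has an extra term $\int_a^b g\,(p|\phi'|^2+q|\phi|^2)\,dx$ on the right.

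To bound the right-hand sides from above, Cauchy--Schwarz in the weighted space $L^2_p(a,b)$ together with a trivial $L^\infty$ estimate gives
$$
\left|\int_a^b g'\,p\phi'\overline{\phi}\,dx\right| \;\le\; \|\phi\|_\infty\,\|g'\|_{p,2}\,\|\phi'\|_{p,2},
$$
and Lemma~\ref{lemest1} turns this into $\beta\gamma\,\|g'\|_{p,2}\,\|\phi\|_{1/p,2}^2$. The extra term in the real-part identity is bounded by $\|g\|_\infty\bigl(\|\phi'\|_{p,2}^2 + \|q\|_1\|\phi\|_\infty^2\bigr)$, which Lemma~\ref{lemest1} upgrades to $\|g\|_\infty(\beta^2+\gamma^2\|q\|_1)\|\phi\|_{1/p,2}^2$.

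The heart of the argument is the \emph{lower} bound $\int_a^b gw|\phi|^2\,dx \ge \tfrac{\veps}{2}\,\|\phi\|_{1/p,2}^2$. Writing $gw|\phi|^2 = (pgw)(|\phi|^2/p)$ and splitting $[a,b]$ according to the exceptional set $E:=\{x : p(x)g(x)w(x)<\veps\}$, one has $pgw\ge\veps$ pointwise on $E^c$, so
$$
\int_{E^c} gw|\phi|^2\,dx \;\ge\; \veps\int_{E^c}\frac{|\phi|^2}{p}\,dx,
$$
while on $E$ the hypothesis \eqref{e:condi} combined with $\|\phi\|_\infty\le\gamma\,\|\phi\|_{1/p,2}$ from Lemma~\ref{lemest1} yields $\int_E |\phi|^2/p\,dx \le \gamma^2\|\phi\|_{1/p,2}^2\,\mu_{1/p}(E)\le \tfrac12\|\phi\|_{1/p,2}^2$, so that $\int_{E^c}|\phi|^2/p\,dx \ge \tfrac12\|\phi\|_{1/p,2}^2$. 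Since $gw\ge 0$ a.e., the contribution over $E$ can be discarded. Dividing both identities by $(\veps/2)\|\phi\|_{1/p,2}^2>0$ (an eigenfunction is not identically zero) produces \eqref{e:im_est} and \eqref{e:re_est} in one stroke.

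The main obstacle is exactly this lower bound: near turning points of $w$, near the endpoints $a,b$, and on any set where $g$ is small, the weight $pgw$ degenerates and cannot by itself control the full norm $\|\phi\|_{1/p,2}$. The $L^\infty$-estimate for $\phi$ furnished by Lemma~\ref{lemest1} is what permits the degenerate region to be handled purely through its $\mu_{1/p}$-measure, and the threshold $\veps$ in \eqref{e:condi} is calibrated precisely so that this measure is at most $1/(2\gamma^2)$, giving the factor $\tfrac12$ needed to close the estimate.
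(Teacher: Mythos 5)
Your proposal is correct and follows essentially the same route as the paper's own proof: the identity $\int_a^b g'(x)\int_x^b w|\phi|^2\,dt\,dx=\int_a^b gw|\phi|^2$ (obtained in the paper by integration by parts rather than Fubini, which is equivalent), the lower bound $\int_a^b gw|\phi|^2\ge\tfrac{\veps}{2}\|\phi\|_{1/p,2}^2$ via the splitting on $\{pgw<\veps\}$ and the $L^\infty$-bound of Lemma~\ref{lemest1}, and the Cauchy--Schwarz estimates combined with $\|\phi'\|_{p,2}\le\beta\|\phi\|_{1/p,2}$, $\|\phi\|_\infty\le\gamma\|\phi\|_{1/p,2}$. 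The only cosmetic difference is that the paper normalizes $\|\phi\|_{1/p,2}=1$ at the outset instead of dividing through at the end.
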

\begin{proof}
Let $\lambda\in\C\setminus\R$ be a non-real eigenvalue of $A(\calD)$ and let $\phi\in\calD$ be a corresponding eigenfunction. It is no restriction to assume that
\begin{equation}\label{norm1}
 \Vert\phi\Vert_{\frac{1}{p},2} = 1.
\end{equation}
Since $g(a) = g(b) = 0$, integration by parts yields
\begin{equation}\label{gutenmorgen}
\begin{split}
 \int_a^b g'(x)\int_x^b w(t)|\phi(t)|^2\,dt\,dx&=-\int_a^b g(x)\,\frac{d}{dx}\int_x^b w(t)|\phi(t)|^2\,dt\,dx \\ &= \int_a^b g(x)w(x)|\phi(x)|^2\,dx.
\end{split}
\end{equation}
Let $\Omega := \{x\in [a,b] : p(x)g(x)w(x) < \veps\}$ and $\Omega^c = [a,b]\setminus\Omega$. By assumption we
have $\mu_{p^{-1}}(\Omega)\le\frac 1{2\gamma^2}$
and hence we find
\begin{align}
\begin{split}\label{e:meins}
\int_a^b gw|\phi|^2 &= \int_a^b (pgw)(|\phi|^2 p^{-1})\ge\,\veps\int_{\Omega^c}|\phi|^2 p^{-1}= \veps\left(1 - \int_\Omega|\phi|^2 p^{-1}\right)\\
&\ge\,\veps\left(1 - \Vert\phi\Vert_\infty^2\mu_{p^{-1}}(\Omega)\right)\,\ge\,\veps\left(1 - \gamma^2\mu_{p^{-1}}(\Omega)\right)\,\ge\,\frac\veps 2,
\end{split}
\end{align}
where we have used the estimate $\Vert\phi\Vert_\infty^2\leq 
\gamma^2$ from Lemma~\ref{lemest1} together with \eqref{norm1}. Combining \eqref{gutenmorgen} and \eqref{e:meins}
we have shown the estimate
\begin{equation}\label{nutzlich}
 \frac\veps 2\leq\int_a^b g'(x)\int_x^b w(t)|\phi(t)|^2\,dt\,dx.
\end{equation}
From this, \eqref{e:im} and $g(a)=g(b)=0$ we obtain
\begin{align*}
|\Im\la\,|\frac\veps 2
&\le\left|\int_a^b g'(x)\,(\Im\lambda)\int_x^b w(t)|\phi(t)|^2\,dt\,dx\right|\\
&=\left|\int_a^b g'(x)\Im\bigl((p\phi')(x)\ol{\phi(x)} - (p\phi')(b)\ol{\phi(b)}\,\bigr)\,dx\right|\\
&=\left|\int_a^b g'(x)\Im \bigl((p\phi')(x)\ol{\phi(x)}\,\bigr)\,dx\right|
\end{align*}
which can be estimated further as follows
\begin{equation*}
\le\,\int_a^b|g'p\phi'\phi|\,\le\,\Vert\phi\Vert_\infty\int_a^b|g'\vert p^{1/2} \,\vert\phi'|p^{1/2}\,\le\,\Vert\phi\Vert_\infty
\|g'\|_{p,2}\|\phi'\|_{p,2}.
\end{equation*}
Thus the assertion on the imaginary part of the eigenvalue $\lambda$ follows from the estimates
$\|\phi'\|_{p,2}\leq\beta$ and $\Vert\phi\Vert_\infty\leq\gamma$;
cf. Lemma~\ref{lemest1} and \eqref{norm1}.

It remains to estimate the real part of $\lambda$. For this we set $G(x):=\int_x^b p\vert\phi^\prime\vert^2+q\vert\phi\vert^2$.
From \eqref{nutzlich} and \eqref{e:real} we have
\begin{equation}\label{nabitte}
\begin{split}
 |\Re\la\,|\frac\veps 2 & \leq\left|\int_a^b g'(x)\left(\Re\bigl((p\phi')(x)\ol{\phi(x)} - (p\phi')(b)\ol{\phi(b)}\,\bigr) + G(x)\right)\,dx\right|\\
&=\left|\int_a^b g'(x)\, \Re\bigl( (p\phi')(x)\ol{\phi(x)}\,\bigr)\,dx + \int_a^b g'(x) G(x)\,dx \right|.
\end{split}
\end{equation}
Integration by parts shows
$$
\int_a^b g'(x) G(x)\,dx=-\int_a^b g(x)G^\prime(x)\,dx=\int_a^b g(x) \bigl(p(x)\vert\phi^\prime(x)\vert^2+q(x)\vert\phi(x)\vert^2\bigr)\,dx
$$
and hence from \eqref{nabitte} we have %can be further estimated by
\begin{equation}\label{nabitte2}
\begin{split}
 |\Re\la\,|\frac\veps 2 &\leq \int_a^b \vert g^\prime p\phi^\prime\phi\vert + \int_a^b \left|g\bigl(p\vert\phi^\prime\vert^2+q\vert\phi\vert^2\bigr)\right|\\
 &\leq\Vert\phi\Vert_\infty\int_a^b\vert g^\prime\vert p^{1/2} \,\vert\phi^\prime\vert p^{1/2} + \Vert g\Vert_\infty \int_a^b \left|p\vert\phi^\prime\vert^2+q\vert\phi\vert^2\right|\\
 &\leq\Vert\phi\Vert_\infty\Vert g^\prime\Vert_{p,2}\Vert \phi^\prime\Vert_{p,2}+ \Vert g\Vert_\infty\bigl( \Vert\phi^\prime\Vert_{p,2}^2+
\Vert q\Vert_1\Vert \phi\Vert_\infty^2\bigr).
\end{split}
\end{equation}
The inequality \eqref{nabitte2} together with the estimates
$\|\phi'\|_{p,2}\leq\beta$ and $\Vert\phi\Vert_\infty\leq\gamma$ (see Lemma~\ref{lemest1} and \eqref{norm1}) imply
$$
|\Re\la\,|\frac\veps 2\leq\beta\,\gamma\,\Vert g^\prime\Vert_{p,2} + \Vert g\Vert_\infty\bigl( \beta^2+ \gamma^2 \Vert q\Vert_1\bigr)
$$
which is \eqref{e:re_est}. The theorem is proved.
\end{proof}

As the condition in Theorem \ref{t:main} concerning the existence of the absolutely continuous function $g$ is somewhat implicit, we show in the next corollary how the theorem becomes more explicit in the case of an indefinite weight function with a finite number of turning points, that is, the interval $(a,b)$ can be segmented into a finite number of intervals on each of which $\sgn(w)$ is constant.

\begin{cor}\label{c:ftp}
Assume $p = 1$ and that $w$ has $n$ turning points in $(a,b)$. Moreover, let $\calD$ be a selfadjoint domain and let $\alpha$, $\beta$, $\gamma$ be as in {\rm\eqref{ab}}. Then, with $\veps > 0$ chosen such that
$$
\mu_1\big(\{x\in (a,b) : |w(x)| < \veps\}\big)\,\le\,\frac 1 {4\gamma^2},
$$
the following holds for all eigenvalues $\la\in\C\setminus\R$ of the operator $A(\calD)$:
$$
|\Im\la\,|\,\le\,\frac{8}{\veps}\,\beta\,\gamma^2\,(n+1)
\quad\text{and}\quad
|\Re\la\,|\,\le\,\frac{2}{\veps}\bigl(4\, \beta\,\gamma^2\,(n+1) + (\beta^2 + \gamma^2\|q\|_1)\bigr).
$$
\end{cor}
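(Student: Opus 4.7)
The plan is to apply Theorem~\ref{t:main} with an explicit piecewise linear trapezoidal $g$ adapted to the sign pattern of $w$. Denote the $n$ turning points by $a = a_0 < a_1 < \cdots < a_n < a_{n+1} = b$, so that $\sgn w = \sigma_i \in \{\pm 1\}$ is constant on each subinterval $(a_i, a_{i+1})$. Fixing the ramp width $h := 1/(8\gamma^2(n+1))$, I would define $g \in \AC[a,b]$ on each $[a_i,a_{i+1}]$ to vanish at the endpoints, to equal $\sigma_i$ on the middle interval $[a_i+h, a_{i+1}-h]$, and to interpolate linearly on the two ramps of width $h$; if some $L_i := a_{i+1}-a_i$ is smaller than $2h$, the trapezoid degenerates to a triangle of the same slope magnitude $1/h$ and peak $\sigma_i L_i/(2h)$. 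Then $g(a) = g(b) = 0$, $\sgn g = \sgn w$ a.e., $\|g\|_\infty \leq 1$, and a direct computation (each subinterval contributes at most $2/h$ to $\|g'\|_2^2$, whether it fits a trapezoid or a triangle) gives $\|g'\|_{p,2}^2 = \|g'\|_2^2 \leq 2(n+1)/h = 16\gamma^2(n+1)^2$, so $\|g'\|_{p,2} \leq 4\gamma(n+1)$.

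The key step is to verify the hypothesis \eqref{e:condi} of Theorem~\ref{t:main}. Since $p = 1$, I split the set $\{x : pgw < \varepsilon\} = \{gw < \varepsilon\}$ by its intersection with the plateau $\{|g| = 1\}$. On the plateau $gw = |w|$, so $\{gw < \varepsilon\} \cap \{|g|=1\} \subseteq \{|w|<\varepsilon\}$, which by the corollary's assumption has measure at most $1/(4\gamma^2)$; the complement of the plateau consists of all ramps together with any short subintervals, a disjoint union with total measure at most $2(n+1)h = 1/(4\gamma^2)$. Adding gives $\mu_1(\{gw<\varepsilon\}) \leq 1/(2\gamma^2)$, as required.

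Substituting the bounds $\|g'\|_{p,2} \leq 4\gamma(n+1)$ and $\|g\|_\infty \leq 1$ into \eqref{e:im_est} and \eqref{e:re_est} of Theorem~\ref{t:main} yields exactly the claimed estimates on $|\Im\lambda|$ and $|\Re\lambda|$. The main obstacle is really just the choice of $h$: it must balance the plateau contribution and the ramp/short-interval contribution in the measure estimate against the size of $\|g'\|_2$, and the specific value $h = 1/(8\gamma^2(n+1))$ is the one that equates the two sources of slack in \eqref{e:condi} while reproducing the precise constants appearing in the corollary.
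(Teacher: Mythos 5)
Your proposal is correct and follows essentially the same route as the paper: a trapezoidal $g$ with ramp width $1/(8\gamma^2(n+1))$, the bounds $\|g\|_\infty\le 1$ and $\|g'\|_2\le 4\gamma(n+1)$, the splitting of $\{gw<\veps\}$ into the plateau part (contained in $\{|w|<\veps\}$) and the non-plateau part of measure at most $1/(4\gamma^2)$, and then an application of Theorem~\ref{t:main}. The only cosmetic difference is that on subintervals shorter than $2h$ you use a degenerate triangle where the paper uses a parabolic bump; both satisfy the same slope and measure bounds.
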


\begin{proof}
Let $x_1 < \ldots < x_n$ be the turning points of $w$ in $(a,b)$, put $x_0 := a$, $x_{n+1} := b$, and define the constant
$$
\nu := \frac{1}{8(n+1)\gamma^2}\,.
$$
Let $k\in\{0,\ldots,n\}$. If $x_{k+1} - x_k\ge 2\nu$, for $x\in [x_k,x_{k+1}]$ we set
$$
g(x) := \sgn(w|(x_k,x_{k+1}))\cdot
\begin{cases}
\frac{x - x_k}{\nu}     & \text{for }x\in [x_k,x_k + \nu]\\
1                          & \text{for }x\in [x_k + \nu,x_{k+1} - \nu]\\
\frac{x_{k+1} - x}{\nu} & \text{for }x\in [x_{k+1} - \nu,x_{k+1}].
\end{cases}
$$
If $x_{k+1} - x_k < 2\nu$, then we define
$$
g(x) := \sgn(w|(x_k,x_{k+1}))\cdot\frac{(x - x_k)(x_{k+1} - x)}{2\nu^2},\qquad x\in [x_k,x_{k+1}].
$$
Obviously, we have $g\in\AC[a,b]$, $g'\in L^2(a,b)$, $g(a) = g(b) = 0$, and $gw > 0$ a.e.. Moreover,
$$
\int_{x_k}^{x_{k+1}}|g'(x)|^2\,dx \le
\begin{cases}
\frac 2{\nu} & \text{if }x_{k+1} - x_k\ge 2\nu\\
\frac{x_{k+1} - x_k}{\nu^2} & \text{if }x_{k+1} - x_k < 2\nu
\end{cases}
\;\,\le\;\frac 2{\nu}\,.
$$
In addition, it is easy to see that $|g(x)|\le 1$ for every $x\in (a,b)$. Hence, we obtain
$$
\|g\|_\infty \leq 1\qquad\text{and}\qquad\|g'\|_2\le\sqrt{\frac{2}{\nu}\,(n+1)} = 4\gamma (n+1).
$$
Now, define $\calS := \{x\in (a,b) : |g(x)|\neq 1\}$ and $\Omega := \{x\in (a,b) : g(x)w(x) < \veps\}$. Then we have
$\mu_1(\calS)\leq 2\nu(n+1)$ and hence
$$
\mu_1(\Omega)\,\le\,\mu_1\left(\left\{x\in (a,b)\setminus\calS : |w(x)| < \veps\right\}\right) + 2\nu (n+1)\,\le\,\frac 1 {2\gamma^2}.
$$
The claim now follows from Theorem \ref{t:main}.
\end{proof}

As the following example illustrates, Theorem \ref{t:main} also applies to weight functions with an infinite number of turning points.

\begin{ex}\label{auweia}
Let $p = 1$ and $w(x) = \sin(1/x)$ for $x\in [0,\frac 1 \pi]$. Then $w\in L^1(0,\frac 1 \pi)$, but $w\notin\AC[0,\frac 1 \pi]$ 
(and hence the results in \cite{qc} do not apply here). In order to estimate the non-real eigenvalues of the 
equation \eqref{e:SLEq} with some $q\in L^1(0,\frac 1\pi)$ and some selfadjoint boundary conditions, 
we put $g(x) := x^4\sin(1/x)$. Then $g$ is a function as in Theorem \ref{t:main} with
$$
\|g\|_\infty\le 0.003\qquad\text{and}\qquad\|g'\|_2\le 0.02.
$$
Choose $k_0\in\N$ such that $(k_0+1)\pi > 4\gamma^2$. Then, if for $k\in\N$ we set
$$
I_k := \left[\frac{1}{(k+1)\pi},\frac{1}{k\pi}\right],\qquad\text{we have}\quad\mu_1\left(\bigcup_{k=k_0+1}^\infty I_k\right) < \frac 1 {4\gamma^2},
$$
so that for \eqref{e:condi} to hold it suffices to find $\veps > 0$ such that
\begin{equation}\label{e:mu1}
\mu_1\left(\left\{x\in\bigcup_{k=1}^{k_0} I_k : x^4\sin^2(1/x) < \veps\right\}\right) < \frac 1 {4\gamma^2}.
\end{equation}
For this, we first observe that for $x\in I_k$ we have $x^2|\sin(1/x)|\ge p_k(x)$, where
$$
p_k(x) := \frac 1 \pi(1 - k\pi x)((k+1)\pi x - 1),\qquad x\in I_k.
$$
It is easily seen that for $\veps > 0$ small enough we have
$$
\mu_1\bigl(\{x\in I_k : p_k(x) < \sqrt{\veps}\}\bigr) = \frac{1-\sqrt{1-4\sqrt{\veps}k(k+1)\pi}}{k(k+1)\pi}\,\le\,\frac{4\sqrt{\veps}k(k+1)\pi}{k(k+1)\pi} = 4\sqrt{\veps}.
$$
Hence, with $\sqrt{\veps} := \frac{1}{4k_0(k_0+1)\pi}$ we can estimate the left-hand side of \eqref{e:mu1} by
$$
\sum_{k=1}^{k_0}\mu_1\bigl(\{x\in I_k : x^2|\sin(1/x)|<\sqrt\veps\}\bigr)\,\le\,4k_0\sqrt\veps = \frac{1}{(k_0+1)\pi}\,<\,\frac 1 {4\gamma^2},
$$
so that \eqref{e:mu1}, and hence \eqref{e:condi}, is satisfied. Now, we find estimates on the non-real eigenvalues by making use of \eqref{e:im_est} and \eqref{e:re_est} in Theorem \ref{t:main}.
\end{ex}

In the next lemma we prove estimates different from those in Lemma~\ref{lemest1} for $\Vert\phi^\prime\Vert_{p,2}$ and $\Vert\phi\Vert_\infty$
under the assumption that the weight function $w$ is such that
\begin{equation}\label{lamb}
 \int_a^b w \not=0.
\end{equation}
These involve the constant $\alpha$ in \eqref{ab} and the constant $\delta$ defined by
\begin{equation}\label{deltadelta}
\delta:=2+2\,\frac{\Vert w\Vert_1}{\bigl|\int_a^b w\bigr|}.
\end{equation}
The proof of Lemma \ref{lemest2} can be found in Section 5.

\begin{lem}\label{lemest2}
Assume that the weight function $w$ satisfies \eqref{lamb}.
Then for all $\lambda\in\C\setminus\R$ and all solutions $\phi\in\calD$ of the equation \eqref{ode} the following estimates hold:
\begin{equation*}
  \Vert\phi^\prime\Vert_{p,2}\leq \alpha\,\delta\, \Vert\phi\Vert_{\frac{1}{p},2}\qquad\text{and}\qquad\Vert\phi\Vert_\infty\leq
\sqrt{\alpha}\,\delta\,
\Vert\phi\Vert_{\frac{1}{p},2}.
\end{equation*}
\end{lem}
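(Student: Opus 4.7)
The plan is to mirror the strategy behind Lemma~\ref{lemest1}, but to replace its pointwise Sobolev-type bound on $|\phi|$ by a sharper one that exploits the hypothesis $\int_a^b w\neq 0$ together with the fact that, at a non-real eigenvalue, any eigenfunction is Krein-neutral. If $\phi\equiv 0$ the claim is trivial; otherwise $\phi$ is an eigenfunction of $A(\calD)$ at $\la\in\C\setminus\R$, and setting $x=a$ in \eqref{e:im} and invoking the self-adjointness identity \eqref{e:sa} from Lemma~\ref{l:D} forces
\begin{equation*}
\int_a^b w|\phi|^2=0.
\end{equation*}

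First I bound $\|\phi'\|_{p,2}$ by $\|\phi\|_\infty$. Setting $x=a$ in \eqref{e:real}, using the vanishing of $\int_a^b w|\phi|^2$, the boundary estimate \eqref{e:wichtig} from Lemma~\ref{l:D}, and the pointwise bound $-q\le|q_-|$ (so that $-\int_a^b q|\phi|^2\le\|q_-\|_1\|\phi\|_\infty^2$), I obtain
\begin{equation*}
\|\phi'\|_{p,2}^2\le\bigl(c(\calD)+\|q_-\|_1\bigr)\|\phi\|_\infty^2=\alpha\|\phi\|_\infty^2.
\end{equation*}

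Next I produce an oscillation bound for $|\phi|^2$: the identity $|\phi(x)|^2-|\phi(y)|^2=2\Re\int_y^x\phi'\ol\phi$, combined with the Cauchy--Schwarz inequality applied to the splitting $|\phi'|p^{1/2}\cdot|\phi|p^{-1/2}$, yields
\begin{equation*}
\bigl||\phi(x)|^2-|\phi(y)|^2\bigr|\le 2\|\phi'\|_{p,2}\|\phi\|_{\frac{1}{p},2}.
\end{equation*}
Set $m:=\min_{[a,b]}|\phi|^2$. Writing $|\phi|^2=m+(|\phi|^2-m)$ with the second term nonnegative and invoking $\int_a^b w|\phi|^2=0$ together with $\int_a^b w\neq 0$ gives
\begin{equation*}
m\,\Bigl|\int_a^b w\Bigr|\le\int_a^b|w|\bigl(|\phi|^2-m\bigr)\le\|w\|_1\bigl(\|\phi\|_\infty^2-m\bigr)\le 2\|w\|_1\|\phi'\|_{p,2}\|\phi\|_{\frac{1}{p},2},
\end{equation*}
so $m\le\frac{2\|w\|_1}{|\int_a^b w|}\,\|\phi'\|_{p,2}\|\phi\|_{\frac{1}{p},2}$. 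Evaluating the oscillation bound at a minimizer of $|\phi|^2$ then produces
\begin{equation*}
\|\phi\|_\infty^2\le m+2\|\phi'\|_{p,2}\|\phi\|_{\frac{1}{p},2}\le\delta\,\|\phi'\|_{p,2}\|\phi\|_{\frac{1}{p},2}.
\end{equation*}

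The two key inequalities $\|\phi'\|_{p,2}^2\le\alpha\|\phi\|_\infty^2$ and $\|\phi\|_\infty^2\le\delta\|\phi'\|_{p,2}\|\phi\|_{\frac{1}{p},2}$ close the argument: inserting the second into the first yields $\|\phi'\|_{p,2}\le\alpha\delta\|\phi\|_{\frac{1}{p},2}$, and feeding this back into the second gives $\|\phi\|_\infty\le\sqrt{\alpha}\,\delta\,\|\phi\|_{\frac{1}{p},2}$. The main obstacle I anticipate is the third step: spotting that the mean-zero constraint $\int_a^b w|\phi|^2=0$ can, in the presence of $\int_a^b w\neq 0$, be converted via the $|\phi|^2$-oscillation estimate into control of $\min|\phi|^2$ and hence of $\|\phi\|_\infty$. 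This is precisely the mechanism by which the factor $\frac{\|w\|_1}{|\int_a^b w|}$ entering $\delta$ appears, in place of the $\|p^{-1}\|_1$ factors of Lemma~\ref{lemest1}.
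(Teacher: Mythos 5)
Your proof is correct, and it follows the same overall skeleton as the paper's: first $\int_a^b w|\phi|^2=0$ from \eqref{e:im} at $x=a$ together with \eqref{e:sa}; then $\Vert\phi'\Vert_{p,2}^2\le\alpha\Vert\phi\Vert_\infty^2$ exactly as in \eqref{edinburgh}; then the key inequality $\Vert\phi\Vert_\infty^2\le\delta\,\Vert\phi'\Vert_{p,2}\Vert\phi\Vert_{\frac{1}{p},2}$; and finally the same bootstrapping of the two inequalities into the stated bounds. The one place where you genuinely diverge is the derivation of the key inequality. The paper integrates by parts against the primitive $W(x)=\int_a^x w$ to obtain the pointwise identity $|\phi(x)|^2=-\int_x^b(|\phi|^2)'+\frac{1}{W(b)}\int_a^b W\,(|\phi|^2)'$ and then estimates both terms by $2\int_a^b|\phi'\phi|$, picking up the factor $\Vert W\Vert_\infty/|W(b)|\le\Vert w\Vert_1/|\int_a^b w|$ from the second term. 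You instead bound $m:=\min_{[a,b]}|\phi|^2$ directly from the mean-zero constraint, via $m\,|\int_a^b w|\le\Vert w\Vert_1(\Vert\phi\Vert_\infty^2-m)$, and then add the oscillation estimate $\Vert\phi\Vert_\infty^2-m\le 2\Vert\phi'\Vert_{p,2}\Vert\phi\Vert_{\frac{1}{p},2}$. Both mechanisms yield exactly the same constant $\delta=2+2\Vert w\Vert_1/|\int_a^b w|$; your version makes it somewhat more transparent that the factor $\Vert w\Vert_1/|\int_a^b w|$ measures how far $w$ is from having a definite sign in the mean, while the paper's version is a shorter computation once $W$ has been introduced.
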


By the same reasoning as in the proof of Theorem~\ref{t:main} the estimates on $\Vert\phi^\prime\Vert_{p,2}$ and $\Vert\phi\Vert_\infty$ yield bounds on the non-real eigenvalues of the selfadjoint realizations of the regular indefinite Sturm-Liouville expression $\tau$. We note that the estimates in Theorem~\ref{t:main} and Theorem~\ref{t:main2} below are not directly comparable, but can of course be combined if $w$ satisfies assumption~\eqref{lamb}.

\begin{thm}\label{t:main2}
Assume that the weight function $w$ satisfies \eqref{lamb}, let $\calD$ be a selfadjoint domain, let $\alpha$ and $\delta$ be as above and
assume that there exists a real-valued function $g\in\AC[a,b]$ with $g(a) = g(b) = 0$ and $g'\in L^2_p(a,b)$ such that $gw > 0$ a.e.\ on $(a,b)$. Then, with $\veps > 0$ chosen such that
$$
\mu_{\frac{1}{p}}\big(\{x\in [a,b] : p(x)g(x)w(x)<\veps\}\big)\,\le\,\frac{1}{2\alpha\delta^2}\,,
$$
the following holds for all eigenvalues $\lambda\in\C\setminus\R$ of the operator $A(\calD)$:
\begin{equation}\label{e:im_est2}
|\Im\la\,|\,\le\,\frac{2}{\veps}\,\alpha^{3/2}\,\delta^2\,\|g'\|_{p,2}
\end{equation}
and
\begin{equation}\label{e:re_est2}
|\Re\la\,|\,\le\,\frac{2}{\veps}\,\alpha\,\delta^2 \bigl(\sqrt{\alpha}\|g'\|_{p,2}+(\alpha+\Vert q\Vert_1)\Vert g\Vert_\infty \bigr).
\end{equation}
\end{thm}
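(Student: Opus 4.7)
The plan is to mirror the proof of Theorem~\ref{t:main} step by step, replacing every invocation of Lemma~\ref{lemest1} with the corresponding estimates from Lemma~\ref{lemest2}. Let $\lambda\in\C\setminus\R$ be a non-real eigenvalue of $A(\calD)$ with eigenfunction $\phi\in\calD$ normalized so that $\|\phi\|_{\frac{1}{p},2}=1$. Then Lemma~\ref{lemest2} delivers the two a priori bounds $\|\phi'\|_{p,2}\le\alpha\delta$ and $\|\phi\|_\infty\le\sqrt{\alpha}\,\delta$, which will drive the rest of the argument.

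First I would reproduce the integration-by-parts identity \eqref{gutenmorgen}, which is purely a consequence of $g(a)=g(b)=0$ and needs no modification. The critical lower estimate for $\int_a^b gw|\phi|^2$ is obtained as in \eqref{e:meins}: splitting $[a,b]$ into $\Omega=\{pgw<\veps\}$ and its complement and using $\|\phi\|_\infty^2\le\alpha\delta^2$ together with the new assumption $\mu_{\frac{1}{p}}(\Omega)\le\frac{1}{2\alpha\delta^2}$ gives
\[
\int_a^b gw|\phi|^2 \,\ge\, \veps\bigl(1-\alpha\delta^2\mu_{\frac{1}{p}}(\Omega)\bigr) \,\ge\, \frac{\veps}{2},
\]
so the key inequality \eqref{nutzlich} remains valid verbatim.

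Next, to bound the imaginary part, I apply \eqref{e:im} inside the double integral, use $g(a)=g(b)=0$ to drop the boundary term $(p\phi')(b)\ol{\phi(b)}$, and estimate
\[
|\Im\lambda|\,\frac{\veps}{2} \,\le\, \|\phi\|_\infty\,\|g'\|_{p,2}\,\|\phi'\|_{p,2} \,\le\, \sqrt{\alpha}\,\delta\cdot\|g'\|_{p,2}\cdot\alpha\delta,
\]
which yields \eqref{e:im_est2}. For the real part I follow the chain \eqref{nabitte}--\eqref{nabitte2} of Theorem~\ref{t:main}: integration by parts on $\int g'G$ with $G(x)=\int_x^b(p|\phi'|^2+q|\phi|^2)$ turns the contribution into $\int g(p|\phi'|^2+q|\phi|^2)$, and after the obvious triangle/Cauchy--Schwarz steps this gives
\[
|\Re\lambda|\,\frac{\veps}{2} \,\le\, \|\phi\|_\infty\|g'\|_{p,2}\|\phi'\|_{p,2} + \|g\|_\infty\bigl(\|\phi'\|_{p,2}^2 + \|q\|_1\|\phi\|_\infty^2\bigr).
\]
Substituting $\|\phi'\|_{p,2}\le\alpha\delta$ and $\|\phi\|_\infty\le\sqrt{\alpha}\,\delta$ and collecting the factor $\alpha\delta^2$ produces \eqref{e:re_est2}.

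There is really no hard obstacle here: the only thing to watch is bookkeeping of the constants, specifically that the threshold $\frac{1}{2\alpha\delta^2}$ in the measure condition matches the new bound $\|\phi\|_\infty^2\le\alpha\delta^2$, and that the final grouping in \eqref{e:re_est2} extracts $\alpha\delta^2$ cleanly so that $\sqrt{\alpha}\|g'\|_{p,2}$ and $(\alpha+\|q\|_1)\|g\|_\infty$ appear with the advertised coefficient. All remaining steps are identical to the proof of Theorem~\ref{t:main}, so the argument goes through without further incident.
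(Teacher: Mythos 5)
Your proposal is correct and is essentially identical to the paper's own proof, which likewise reruns the argument of Theorem~\ref{t:main} verbatim up to \eqref{nutzlich} and then substitutes the bounds $\|\phi'\|_{p,2}\le\alpha\delta$ and $\|\phi\|_\infty\le\sqrt{\alpha}\,\delta$ from Lemma~\ref{lemest2}. The constant bookkeeping (the threshold $\frac{1}{2\alpha\delta^2}$ matching $\|\phi\|_\infty^2\le\alpha\delta^2$, and the factor $\alpha\delta^2$ in \eqref{e:re_est2}) checks out exactly as you describe.
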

\begin{proof}
Let $\lambda\in\C\setminus\R$ be an eigenvalue corresponding to some eigenfunction $\phi\in\calD$ and assume that $\phi$
satisfies \eqref{norm1}. The same reasoning as in \eqref{gutenmorgen} and \eqref{e:meins} leads to \eqref{nutzlich},
and hence to the estimates
$$\vert\Im\lambda\,\vert\leq\frac{2}{\veps} \Vert\phi\Vert_\infty
\|g'\|_{p,2}\|\phi'\|_{p,2}
$$
and
$$\vert\Re\lambda\,\vert\leq\frac{2}{\veps}\Bigl(\Vert\phi\Vert_\infty\Vert g^\prime\Vert_{p,2}\Vert \phi^\prime\Vert_{p,2}+ \Vert g\Vert_\infty\bigl( \Vert\phi^\prime\Vert_{p,2}^2+
\Vert q\Vert_1\Vert \phi\Vert_\infty^2\bigr)\Bigr).
$$
Now the assertions follow from $\Vert\phi^\prime\Vert_{p,2}\leq\alpha\, \delta$ and $\Vert\phi\Vert_\infty\leq\sqrt{\alpha}\,\delta$;
cf. Lemma~\ref{lemest2} and \eqref{norm1}.
\end{proof}

The next corollary is a variant of Corollary~\ref{c:ftp} and can be proved in the same way.

\begin{cor}\label{c:ftp2}
Assume $p = 1$, that $w$ satisfies \eqref{lamb} and has $n$ turning points in $(a,b)$. Moreover,
let $\calD$ be a selfadjoint domain and let $\alpha$ and $\delta$ be as in {\rm\eqref{ab}} and {\rm\eqref{deltadelta}}.
Then, with $\veps > 0$ chosen such that
$$
\mu_1\big(\{x\in (a,b) : |w(x)| < \veps\}\big)\,\le\,\frac 1 {4\alpha\delta^2},
$$
the following holds for all eigenvalues $\la\in\C\setminus\R$ of the operator $A(\calD)$:
$$
|\Im\la\,|\,\le\,\frac{8}{\veps}\,\alpha^2\,\delta^3\,(n+1)
\quad\text{and}\quad
|\Re\la\,|\,\le\,\frac{2}{\veps}\,\alpha\,\delta^2 \bigl(4\,\alpha\,\delta\,(n+1) + \alpha + \Vert q\Vert_1\bigr).
$$
\end{cor}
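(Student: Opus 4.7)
The plan is to mimic the proof of Corollary \ref{c:ftp} verbatim, replacing the role played by $\gamma$ with $\sqrt{\alpha}\,\delta$ (since Lemma \ref{lemest2} provides $\|\phi\|_\infty\le\sqrt{\alpha}\,\delta\|\phi\|_{\frac{1}{p},2}$ in place of the $\gamma$-estimate of Lemma \ref{lemest1}) and invoking Theorem \ref{t:main2} at the end in place of Theorem \ref{t:main}. First I would list the turning points $x_1<\ldots<x_n$ of $w$ in $(a,b)$, set $x_0 := a$, $x_{n+1} := b$, and introduce the new scale
\[
\nu := \frac{1}{8(n+1)\alpha\delta^2},
\]
which is calibrated so that the measure condition in Theorem \ref{t:main2} (with tolerance $\frac{1}{2\alpha\delta^2}$) can be met.

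Next I would define $g$ on each subinterval $[x_k,x_{k+1}]$ exactly as in the proof of Corollary \ref{c:ftp}: a piecewise linear "plateau" of height $\sgn(w|_{(x_k,x_{k+1})})$ whenever $x_{k+1}-x_k\ge 2\nu$, and the quadratic bump $\sgn(w|_{(x_k,x_{k+1})})\cdot(x-x_k)(x_{k+1}-x)/(2\nu^2)$ otherwise. The same calculations as in Corollary \ref{c:ftp} give $g\in\AC[a,b]$, $g(a)=g(b)=0$, $gw>0$ a.e., together with
\[
\|g\|_\infty\le 1\qquad\text{and}\qquad \|g'\|_2\le\sqrt{\tfrac{2}{\nu}(n+1)}=4(n+1)\sqrt{\alpha}\,\delta.
\]

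I would then verify the hypothesis of Theorem \ref{t:main2}. Setting $\calS := \{x\in(a,b):|g(x)|\neq 1\}$, one has $\mu_1(\calS)\le 2\nu(n+1)=\tfrac{1}{4\alpha\delta^2}$; on the complement, $|g|=1$, so $\{x\notin\calS : g(x)w(x)<\veps\}\subseteq\{x\notin\calS:|w(x)|<\veps\}$, whose $\mu_1$-measure is bounded by $\tfrac{1}{4\alpha\delta^2}$ by the choice of $\veps$. Thus
\[
\mu_1\bigl(\{x\in(a,b): g(x)w(x)<\veps\}\bigr)\le \frac{1}{4\alpha\delta^2}+\frac{1}{4\alpha\delta^2}=\frac{1}{2\alpha\delta^2}.
\]

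Finally, Theorem \ref{t:main2} is applicable, and plugging in the bounds on $\|g\|_\infty$ and $\|g'\|_{p,2}=\|g'\|_2$ yields
\[
|\Im\la|\le \tfrac{2}{\veps}\alpha^{3/2}\delta^2\cdot 4(n+1)\sqrt{\alpha}\,\delta=\tfrac{8}{\veps}\alpha^2\delta^3(n+1),
\]
and
\[
|\Re\la|\le\tfrac{2}{\veps}\alpha\delta^2\bigl(\sqrt{\alpha}\cdot 4(n+1)\sqrt{\alpha}\,\delta+(\alpha+\|q\|_1)\bigr)=\tfrac{2}{\veps}\alpha\delta^2\bigl(4\alpha\delta(n+1)+\alpha+\|q\|_1\bigr),
\]
which are the two asserted bounds. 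There is no genuine obstacle here: every step is a direct transcription of Corollary \ref{c:ftp} with the substitution $\gamma^2\leadsto\alpha\delta^2$, and the only item that requires attention is bookkeeping the powers of $\alpha$ and $\delta$ produced by combining the Lemma \ref{lemest2} estimates with the $\nu$-dependent bound on $\|g'\|_2$.
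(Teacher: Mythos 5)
Your proposal is correct and is exactly the argument the paper intends: the paper gives no separate proof of Corollary \ref{c:ftp2}, stating only that it ``can be proved in the same way'' as Corollary \ref{c:ftp}, and your substitution $\gamma^2\leadsto\alpha\delta^2$ in the choice of $\nu$, the construction of $g$, the measure verification, and the final application of Theorem \ref{t:main2} carries this out with the correct bookkeeping of the powers of $\alpha$ and $\delta$.
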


\begin{rem}
If we regard the existence of the function $g$ in Theorems \ref{t:main} and \ref{t:main2} as a condition on the weight function $w$,
it turns out that the condition $g(a) = g(b) = 0$ is redundant. To see this, let $\tilde g$ be an absolutely continuous
function on $[a,b]$ with $\tilde g'\in L^2_p(a,b)$ and $\tilde gw > 0$, choose
a function $h\in\AC[a,b]$ such that
\begin{equation}\label{e:h}
h(x) > 0 \text{ for all }x\in (a,b),\quad h(a) = h(b) = 0,\quad \text{and}\quad h'\in L^2_p(a,b),
\end{equation}
and set $g := h\tilde g$. Then
$$g\in\AC[a,b],\quad g(a) = g(b) = 0,\quad gw > 0\quad\text{and}\quad g'= h'\tilde g + h\tilde g'\in L^2_p(a,b).$$
We note that a function $h$ with the above mentioned properties can be defined as follows:
Choose $x_0\in (a,b)$ such that
$\int_a^{x_0}1/\sqrt p = \int_{x_0}^b1/\sqrt p$ and let
$$
h(x) := \int_a^x\frac{\sgn(x_0-t)}{\sqrt{p(t)}}\,dt,\qquad x\in [a,b].
$$

We also mention that the condition $g(a) = g(b) = 0$ is not redundant for the eigenvalue estimates in Theorems \ref{t:main} and \ref{t:main2}.
\end{rem}

\vspace{.5cm}
\section{Bounds on exceptional real eigenvalues}

Let $A(\calD)$ be a selfadjoint realization of the indefinite Sturm-Liouville expression $\tau$ defined on some selfadjoint
domain $\calD$. It is well known that the resolvent of $A(\calD)$
is a compact operator and that the real eigenvalues of $A(\calD)$ accumulate to $+\infty$ and $-\infty$. Moreover the
real eigenvalues have the following {\it sign properties}; cf. \cite{cl}.

\begin{prop}
Let $\calD$ be a selfadjoint domain. Then there exist at most finitely many real eigenvalues $\lambda\neq 0$ of $A(\calD)$ with a corresponding eigenfunction $\phi$ such that
\begin{equation}\label{arthur}
\lambda[\phi,\phi]=\lambda \int_a^b \vert\phi\vert^2 w\,\,\leq 0.
\end{equation}
These eigenvalues will be called {\em real exceptional eigenvalues} of $A(\calD)$.
\end{prop}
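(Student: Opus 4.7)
The plan is to reduce the finiteness statement to a standard min--max count of non-positive eigenvalues of a semibounded selfadjoint operator in the definite Hilbert space $L^2(a,b)$.

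To each selfadjoint domain $\calD$ I associate the symmetric sesquilinear form
$$
\mathfrak{t}[f,g] := \int_a^b\bigl(p f'\,\overline{g'}+q f\,\overline{g}\bigr)\,dx + \mathfrak{b}_\calD[f,g]
$$
on a form domain $\mathfrak{D}\subset H^1(a,b)$ containing $\calD$, where $\mathfrak{b}_\calD$ is the real, Hermitian boundary expression already appearing in the proof of Lemma~\ref{l:D} (for instance $l\,|f(a)|^2-r\,|f(b)|^2$ for separated conditions, and the analogous rational expression in the entries of $R$ in the coupled case). Integration by parts, essentially \eqref{e:integration} evaluated at $x=a$ and combined with the boundary identities from Lemma~\ref{l:D}, yields the key identity $\mathfrak{t}[\phi,\phi]=\lambda[\phi,\phi]$ for any real eigenvalue $\lambda$ of $A(\calD)$ and corresponding eigenfunction $\phi\in\calD$. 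Consequently the sign condition \eqref{arthur} is equivalent to $\mathfrak{t}[\phi,\phi]\le 0$.

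Next I would show that the selfadjoint operator $T$ in $L^2(a,b)$ associated with $\mathfrak{t}$ is semibounded below and has compact resolvent. A Sobolev-type inequality $\|f\|_\infty^2\le \veps\|f'\|_{p,2}^2 + C_\veps\|f\|_{L^2}^2$ using only $p^{-1}\in L^1(a,b)$ (in the spirit of Lemma~\ref{lemest1}) dominates both $\int q_-|f|^2\le\|q_-\|_1\|f\|_\infty^2$ and $|\mathfrak{b}_\calD[f,f]|\le c(\calD)\|f\|_\infty^2$, and choosing $\veps$ small gives $\mathfrak{t}[f,f]\ge \tfrac12\|f'\|_{p,2}^2 - K\|f\|_{L^2}^2$. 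Hence $\mathfrak{t}$ is closable and semibounded below, its form domain embeds compactly into $L^2(a,b)$ because $(a,b)$ is bounded, and $T$ therefore has discrete spectrum with only finitely many, say $N$ counted with multiplicity, non-positive eigenvalues.

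The proof is then completed by a min--max argument. If $\lambda_1,\ldots,\lambda_m$ are pairwise distinct nonzero real eigenvalues of $A(\calD)$ with eigenfunctions $\phi_j$ satisfying $\lambda_j[\phi_j,\phi_j]\le 0$, selfadjointness of $A(\calD)$ in the Krein space gives $[\phi_j,\phi_k]=0$ for $j\neq k$, so that for any $\phi=\sum_j c_j\phi_j$
$$
\mathfrak{t}[\phi,\phi]=[A(\calD)\phi,\phi]=\sum_j|c_j|^2\lambda_j[\phi_j,\phi_j]\le 0.
$$
Thus $\linspan\{\phi_1,\ldots,\phi_m\}$ is an $m$-dimensional subspace on which $\mathfrak{t}\le 0$, and the min--max principle forces $m\le N$; higher multiplicities within a single eigenspace are handled by choosing a basis of $\ker(A(\calD)-\lambda_j)$ that diagonalizes $[\cdot,\cdot]$ and retaining only the vectors for which the sign condition persists. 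The main technical hurdle I anticipate is establishing the semi-boundedness uniformly across all six cases of \eqref{e:cD} with $q$ and $p^{-1}$ merely in $L^1$, but once the boundary form is absorbed via the Sobolev trace estimate, the remainder is a routine $L^1$-perturbation argument.
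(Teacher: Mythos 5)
The paper does not actually prove this proposition: it is quoted from \'Curgus and Langer \cite{cl} with no argument given, so you have supplied a proof where the authors rely on a citation. Your route is correct in outline and is essentially a self-contained rederivation of the relevant fact from \cite{cl}: the number of real exceptional eigenvalues is bounded by the number $N$ of non-positive eigenvalues of the \emph{definite} operator $T$ in $L^2(a,b)$ associated with the form $\mathfrak t$, i.e.\ by the number of non-positive squares of $[A(\calD)\cdot,\cdot]$. The identity $\mathfrak t[\phi,\phi]=\la[\phi,\phi]$ is exactly \eqref{e:integration} at $x=a$ combined with the boundary computations in the proof of Lemma~\ref{l:D}; the $\product$-orthogonality of eigenfunctions to distinct real eigenvalues is correct; the span of the $\phi_j$ lies in the form domain because functions in $\calD_{\max}$ are absolutely continuous, hence bounded; and Glazman's lemma then gives $m\le N$. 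Two technical points deserve one more line each. First, the inequality $\|f\|_\infty^2\le\veps\|f'\|_{p,2}^2+C_\veps\|f\|_{L^2}^2$ with \emph{arbitrarily small} $\veps$ does not follow from \eqref{e:first_est} as it stands: Young's inequality applied there produces a fixed constant of order $\|p^{-1}\|_1$ in front of $\|f'\|_{p,2}^2$, and the lower-order term is $\|f\|_{\frac{1}{p},2}^2$ rather than $\|f\|_{L^2}^2$. You need to localize to a subinterval $J$ around the maximum of $|f|$ with $\int_Jp^{-1}$ small, which is possible by absolute continuity of the integral of $p^{-1}\in L^1(a,b)$. Second, compactness of the form-domain embedding is not ``because $(a,b)$ is bounded'' but because $|f(x)-f(y)|\le\|f'\|_{p,2}\bigl(\int_y^xp^{-1}\bigr)^{1/2}$ gives equicontinuity, again via $p^{-1}\in L^1(a,b)$, after which Arzel\`a--Ascoli applies. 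With these repairs the argument is complete. What the citation buys the authors is brevity together with the much stronger structural information established in \cite{cl} (definitizability, Riesz basis properties, accumulation of the real spectrum at $\pm\infty$); what your proof buys is a self-contained argument and, in principle, an explicit bound $N$ on the number of exceptional eigenvalues in terms of $p$, $q_-$ and $c(\calD)$, which is very much in the spirit of the rest of the paper.
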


%In other words, there are at most finitely many positive (negative) eigenvalues of $A$ such that there exist a corresponding eigenfunction $\phi$
%with $[\phi,\phi]\leq 0$ ($[\phi,\phi]\geq 0$, respectively). We shall call this finite set of eigenvalues {\it exceptional real eigenvalues}
%of $A$.
We mention that $\la = 0$ is said to be an exceptional eigenvalue of $A(\calD)$ if there exists a function $\psi$ in the root subspace of $A(\calD)$ corresponding to zero such that $[A(\calD)\psi,\psi] < 0$. Furthermore, we note that in \cite{m2} the eigenfunctions corresponding to (non-zero) real exceptional eigenvalues satisfying \eqref{arthur} were called {\em real ghost states}.

In what follows we provide estimates on the real exceptional eigenvalues along the lines of Theorem~\ref{t:main}. The following preparatory lemma is the analog of Lemma~\ref{lemest1} and is also proved in Section \ref{s:lemmas}.

\begin{lem}\label{lemest3}
For all $\lambda\in\R\setminus\{0\}$ and all solutions $\phi\in\calD$ of the equation \eqref{ode} which satisfy \eqref{arthur}
we have
$$
\Vert\phi^\prime\Vert_{p,2}\leq \beta \Vert\phi\Vert_{\frac{1}{p},2}
\qquad\text{and}\qquad
\Vert\phi\Vert_\infty\leq \gamma\,\Vert\phi\Vert_{\frac{1}{p},2}.
$$
\end{lem}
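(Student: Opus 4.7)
The plan is to mirror the proof of Lemma~\ref{lemest1}, noting that the sign condition \eqref{arthur} plays the role that the identity $[\phi,\phi]=0$ plays for non-real eigenvalues. Indeed, in the non-real case, combining \eqref{e:im} at $x=a$ with the selfadjointness identity \eqref{e:sa} forces $\int_a^b w|\phi|^2 = 0$; here the hypothesis $\lambda[\phi,\phi]\leq 0$ provides exactly the one-sided control needed when \eqref{e:real} is applied at $x=a$.

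Specifically, since $\lambda\in\R$, evaluating \eqref{e:real} at $x=a$ and rearranging yields
\begin{equation*}
\int_a^b p|\phi'|^2 \;=\; \lambda\int_a^b w|\phi|^2 \,-\, \Re\bigl((p\phi')(a)\ol{\phi(a)} - (p\phi')(b)\ol{\phi(b)}\bigr) \,-\, \int_a^b q|\phi|^2.
\end{equation*}
By \eqref{arthur} the first term on the right is non-positive and may be dropped; the boundary term is bounded by $c(\calD)\|\phi\|_\infty^2$ via \eqref{e:wichtig} in Lemma~\ref{l:D}; and $-\int_a^b q|\phi|^2 \leq \|q_-\|_1\|\phi\|_\infty^2$ because $-q\leq -q_-=|q_-|$ (as $q_-\leq 0$). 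These three estimates produce the key inequality
\begin{equation*}
\|\phi'\|_{p,2}^2 \;\leq\; \bigl(c(\calD) + \|q_-\|_1\bigr)\|\phi\|_\infty^2 \;=\; \alpha\,\|\phi\|_\infty^2.
\end{equation*}

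The second step is a standard interpolation bound. Normalizing so that $\|\phi\|_{1/p,2}=1$, a mean-value argument furnishes $x_0\in[a,b]$ with $|\phi(x_0)|^2 \leq 1/\|p^{-1}\|_1$, and then the identity $|\phi(\xi)|^2 - |\phi(x_0)|^2 = 2\Re\int_{x_0}^\xi \phi'\ol{\phi}$ combined with the Cauchy--Schwarz splitting $|\phi'\phi|=(p^{1/2}|\phi'|)(p^{-1/2}|\phi|)$ yields $\|\phi\|_\infty^2 \leq 1/\|p^{-1}\|_1 + 2\|\phi'\|_{p,2}$. Substituting into the key inequality produces the quadratic $y^2 - 2\alpha y - \alpha/\|p^{-1}\|_1 \leq 0$ for $y:=\|\phi'\|_{p,2}$, whose positive root is exactly $\beta$ from \eqref{ab}. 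Hence $\|\phi'\|_{p,2}\leq\beta$, and re-inserting into the interpolation bound gives $\|\phi\|_\infty^2\leq 1/\|p^{-1}\|_1 + 2\beta = \gamma^2$; undoing the normalization recovers both claimed inequalities. I foresee no genuine obstacle: the argument is a near-verbatim reprise of the proof of Lemma~\ref{lemest1}, and the only new ingredient is checking that \eqref{arthur} has the correct sign to discard the $\lambda\int_a^b w|\phi|^2$ contribution.
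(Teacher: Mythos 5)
Your proof is correct and follows essentially the same route as the paper: you use \eqref{e:integration} (equivalently \eqref{e:real}) at $x=a$ together with \eqref{arthur} to discard the $\lambda\int_a^b w|\phi|^2$ term and recover the key inequality $\|\phi'\|_{p,2}^2\le\alpha\|\phi\|_\infty^2$ of \eqref{edinburgh}, after which the argument of Lemma~\ref{lemest1} goes through unchanged. Your mean-value derivation of $\|\phi\|_\infty^2\le 1/\|p^{-1}\|_1+2\|\phi'\|_{p,2}$ is a harmless cosmetic variant of the paper's step of multiplying by $p^{-1}(y)$ and integrating in $y$, and yields the same bound \eqref{e:first_est}.
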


Lemma~\ref{lemest3} implies the following variant of Theorem~\ref{t:main}; its proof remains the same. We leave it to the reader to formulate a variant of Corollary~\ref{c:ftp} for real exceptional eigenvalues.

\begin{thm}\label{t:main3}
Let $\calD$ be a selfadjoint domain, let $\alpha$, $\beta$, $\gamma$ be as above, and assume that there exists a real-valued function $g\in\AC[a,b]$ such that $gw > 0$ a.e.\ on $(a,b)$, $g(a) = g(b) = 0$, and $g'\in L^2_p(a,b)$. Then, with $\veps > 0$ chosen such that
$$
\mu_{\frac{1}{p}}\big(\{x\in [a,b] : p(x)g(x)w(x)<\veps\}\big)\,\le\,\frac{1}{2\gamma^2}\,,
$$
the following holds for all real exceptional eigenvalues $\lambda$ of the operator $A(\calD)$:
$$
|\la|\,\le\,\frac{2}{\veps}\,\bigl(\beta\,\gamma\,\|g'\|_{p,2} +  (\beta^2 + \gamma^2\|q\|_1)\|g\|_\infty\bigr).
$$
\end{thm}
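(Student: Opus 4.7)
The plan is to imitate the argument that produced the real-part estimate in Theorem \ref{t:main}, substituting Lemma \ref{lemest3} for Lemma \ref{lemest1}. The only structural difference from the non-real case is that $\Im\lambda$ plays no role here, so only the computation leading to \eqref{e:re_est} needs to be replayed, now applied to $|\lambda|$ directly.

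First, let $\lambda\in\R\setminus\{0\}$ be a real exceptional eigenvalue of $A(\calD)$ with eigenfunction $\phi\in\calD$ satisfying \eqref{arthur}, normalized so that $\|\phi\|_{1/p,2}=1$. Lemma \ref{lemest3} then delivers the two bounds $\|\phi'\|_{p,2}\le\beta$ and $\|\phi\|_\infty\le\gamma$, which were the only consequences of $\lambda\notin\R$ used in the proof of Theorem \ref{t:main}. Next, integration by parts (using $g(a)=g(b)=0$) yields the identity \eqref{gutenmorgen}, and the measure assumption on $\{x\in[a,b]:p(x)g(x)w(x)<\veps\}$ combined with $\|\phi\|_\infty\le\gamma$ reproduces \eqref{e:meins}. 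Consequently
\[
\frac{\veps}{2}\le\int_a^b g'(x)\int_x^b w(t)|\phi(t)|^2\,dt\,dx,
\]
which is \eqref{nutzlich}.

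Since $\lambda$ is real, the identity \eqref{e:real} with $\Re\lambda=\lambda$ reads
\[
\lambda\int_x^b w|\phi|^2=\Re\bigl((p\phi')(x)\overline{\phi(x)}-(p\phi')(b)\overline{\phi(b)}\bigr)+G(x),\qquad G(x):=\int_x^b\bigl(p|\phi'|^2+q|\phi|^2\bigr).
\]
Multiplying this by $g'(x)$ and integrating over $(a,b)$, the contribution of the constant term $(p\phi')(b)\overline{\phi(b)}$ drops out since $\int_a^b g'=g(b)-g(a)=0$. The result is precisely the inequality \eqref{nabitte} with $|\Re\lambda|$ replaced by $|\lambda|$. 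Integrating by parts the $\int g'G$ term converts it to $\int g\bigl(p|\phi'|^2+q|\phi|^2\bigr)$, exactly as in the proof of Theorem \ref{t:main}, and Cauchy--Schwarz together with the bounds $\|\phi\|_\infty\le\gamma$, $\|\phi'\|_{p,2}\le\beta$ from Lemma \ref{lemest3} produce
\[
|\lambda|\,\frac{\veps}{2}\le\beta\,\gamma\,\|g'\|_{p,2}+\|g\|_\infty\bigl(\beta^2+\gamma^2\|q\|_1\bigr),
\]
which is the claimed estimate.

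The entire substance of the argument is therefore already present in Theorem \ref{t:main}; the only place where exceptional behaviour enters is Lemma \ref{lemest3}, whose proof (deferred to Section \ref{s:lemmas}) must replace the hypothesis $\lambda\notin\R$ by the sign condition \eqref{arthur} while still producing the same norm bounds. That is the main obstacle, but it is isolated in the lemma and does not affect the theorem's derivation, which is a verbatim repetition of the real-part computation from Theorem \ref{t:main} with the single cosmetic change that $|\Re\lambda|$ becomes $|\lambda|$ after dropping the $(p\phi')(b)\overline{\phi(b)}$ term.
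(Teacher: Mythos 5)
Your proof is correct and is exactly the paper's argument: the authors state that Theorem \ref{t:main3} follows from Lemma \ref{lemest3} with the proof of Theorem \ref{t:main} otherwise unchanged, which is precisely the substitution you carry out (replaying \eqref{gutenmorgen}--\eqref{nutzlich} and the real-part computation \eqref{nabitte}--\eqref{nabitte2} with $\Re\lambda=\lambda$). You also correctly isolate the only genuinely new content in Lemma \ref{lemest3}, whose proof is deferred to Section \ref{s:lemmas}.
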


\vspace{.5cm}
\section{Proofs of Lemmas~\ref{lemest1}, \ref{lemest2}, and \ref{lemest3}}\label{s:lemmas}
In this section we provide the remaining proofs of Lemma~\ref{lemest1}, Lemmma~\ref{lemest2}, and Lemma~\ref{lemest3}.

\begin{proof}[Proof of Lemma~\ref{lemest1}]
Choosing $x=a$ in \eqref{e:im} and taking into account \eqref{e:sa} and $\Im\la\not=0$ we find
\begin{equation}\label{e:known}
\int_a^b w|\phi|^2 = 0.
\end{equation}
From \eqref{e:integration} we then obtain
\begin{equation}\label{edinburgh}
 \begin{split}
  \Vert\phi^\prime\Vert^2_{p,2}&=(p\phi')(b)\ol{\phi(b)} - (p\phi')(a)\ol{\phi(a)} - \int_a^b q|\phi|^2\\
 &\leq c(\calD)\max\{|\phi(a)|^2,|\phi(b)|^2\}+\int_a^b \vert q_-\vert |\phi|^2 \\
 &\leq\bigl(c(\calD)+\Vert q_-\Vert_1\bigr)\Vert\phi\Vert_\infty^2 = \alpha \Vert\phi\Vert_\infty^2.
 \end{split}
\end{equation}
For $x,y\in [a,b]$, $y<x$,  we have
\begin{align*}
|\phi(x)|^2 - |\phi(y)|^2
&= \int_y^x \left(|\phi|^2\right)' = \int_y^x (\phi^\prime \ol\phi + \phi\ol\phi^\prime)\\
&\le 2\int_a^b|\phi'\phi| = 2\int_a^b \vert \phi'\vert p^{1/2}\,\vert\phi\vert p^{-1/2}\,\le\,2\Vert \phi'\Vert_{p,2}\Vert \phi\Vert_{\frac{1}{p},2},
\end{align*}
where we have used the Cauchy-Schwarz inequality in the last estimate. Multiplying the above inequality  with $p^{-1}(y)$ and integrating
over $[a,b]$ with respect to $y$ gives
$$
\vert\phi(x)\vert^2\Vert p^{-1}\Vert_1-\Vert \phi\Vert_{\frac{1}{p},2}^2\leq 2 \Vert \phi'\Vert_{p,2}\Vert \phi\Vert_{\frac{1}{p},2}\Vert p^{-1}\Vert_1
$$
for all $x\in[a,b]$.
Hence it follows that
\begin{equation}\label{e:first_est}
\|\phi\|_\infty^2 \,\le\, 2\|\phi'\|_{p,2} \Vert \phi\Vert_{\frac{1}{p},2}+  \Vert p^{-1}\Vert_1^{-1} \Vert \phi\Vert_{\frac{1}{p},2}^2.
\end{equation}
Therefore we obtain from \eqref{edinburgh}
$$
\Vert\phi^\prime\Vert^2_{p,2}\leq\alpha \Vert\phi\Vert_\infty^2\leq 2\alpha\|\phi'\|_{p,2} \Vert \phi\Vert_{\frac{1}{p},2}+ \alpha\,
\Vert p^{-1}\Vert_1^{-1} \Vert \phi\Vert_{\frac{1}{p},2}^2.
$$
This yields
$$
\bigl(\Vert\phi^\prime\Vert_{p,2}-\alpha\Vert \phi\Vert_{\frac{1}{p},2} \bigr)^2\leq \alpha\,
\Vert p^{-1}\Vert_1^{-1} \Vert \phi\Vert_{\frac{1}{p},2}^2  + \alpha^2\Vert \phi\Vert_{\frac{1}{p},2}^2
=\alpha\bigl(\Vert p^{-1}\Vert_1^{-1}+\alpha\bigr)\Vert \phi\Vert_{\frac{1}{p},2}^2
$$
and hence
\begin{equation}\label{masson}
\Vert\phi^\prime\Vert_{p,2}\leq \sqrt{\alpha\bigl(1 / \Vert p^{-1}\Vert_1 +\alpha\bigr)}\,\Vert \phi\Vert_{\frac{1}{p},2}+\alpha\Vert \phi\Vert_{\frac{1}{p},2}
=\beta \Vert \phi\Vert_{\frac{1}{p},2},
\end{equation}
so that the first estimate in the lemma is proved. The second estimate follows from the first one and \eqref{e:first_est}. Indeed, with the help
of \eqref{masson} we obtain from \eqref{e:first_est} that
$$
\|\phi\|_\infty^2\leq 2 \beta \Vert \phi\Vert_{\frac{1}{p},2}^2+  \Vert p^{-1}\Vert_1^{-1} \Vert \phi\Vert_{\frac{1}{p},2}^2
$$
holds, which implies $\|\phi\|_\infty\leq \sqrt{2\beta + 1/\Vert p^{-1}\Vert_1} \,\Vert \phi\Vert_{\frac{1}{p},2}=\gamma\,\Vert \phi\Vert_{\frac{1}{p},2}$.
\end{proof}

\begin{proof}[Proof of Lemma~\ref{lemest2}]
Let $W(x):=\int_a^x w$, $x\in[a,b]$,  and observe that integration by parts yields
$$\int_a^b W\,(\vert\phi\vert^2)^\prime =W(b)\vert\phi(b)\vert^2 - \int_a^b w\vert\phi\vert^2 = W(b)\vert\phi(b)\vert^2,$$
where we have used \eqref{e:known} in the last step. This implies
\begin{equation}
 \begin{split}
  W(b)\vert\phi(x)\vert^2 & = -W(b)\bigl(\vert\phi(b)\vert^2-\vert\phi(x)\vert^2\bigr)+\int_a^b W\,(\vert\phi\vert^2)^\prime\\
 & = -W(b)\int_x^b(\vert\phi\vert^2)^\prime+\int_a^b W\,(\vert\phi\vert^2)^\prime
 \end{split}
\end{equation}
and hence with $\Vert W\Vert_\infty\leq \Vert w\Vert_1$ and $W(b)=\int_a^b w$ we conclude that
 \begin{equation}
 \begin{split}
  \vert\phi(x)\vert^2 & = -\int_x^b(\vert\phi\vert^2)^\prime+\frac{1}{W(b)}\int_a^b W\,(\vert\phi\vert^2)^\prime\\
 & \leq 2 \int_a^b \vert\phi^\prime\phi\vert+2\frac{\Vert W\Vert_\infty}{\vert W(b)\vert }\int_a^b \vert\phi^\prime\phi\vert\\
 & \leq \left(2+2\frac{\Vert w\Vert_1}{\bigl| \int_a^b w\bigr| }\right)\int_a^b \vert\phi^\prime\vert p^{1/2} \vert \phi \vert p^{-1/2}\\
 & \leq \delta\, \Vert \phi^\prime\Vert_{p,2}\Vert \phi\Vert_{\frac{1}{p},2}
 \end{split}
\end{equation}
holds for all $x\in[a,b]$.
This leads to the estimate
 \begin{equation}\label{icms}
\Vert\phi\Vert^2_\infty\leq \delta\, \Vert \phi^\prime\Vert_{p,2}\Vert \phi\Vert_{\frac{1}{p},2}.
\end{equation}
As in the proof of Lemma~\ref{lemest1} we have $\Vert\phi^\prime\Vert^2_{p,2}\leq\alpha\Vert\phi\Vert^2_\infty$ (cf. \eqref{edinburgh})
which together with \eqref{icms} yields
$$\Vert\phi^\prime\Vert_{p,2}\leq\alpha\, \delta\, \Vert \phi\Vert_{\frac{1}{p},2}.$$
Plugging this into \eqref{icms} gives
$$\Vert\phi\Vert_\infty\leq\sqrt{\alpha}\,\delta\,\Vert \phi\Vert_{\frac{1}{p},2}$$
which completes the proof of Lemma~\ref{lemest2}.
\end{proof}

\begin{proof}[Proof of Lemma~\ref{lemest3}]
For a solution $\phi\in\calD$ we have
\begin{equation}
\la \int_a^b w|\phi|^2 = (p\phi')(a)\ol{\phi(a)} - (p\phi')(b)\ol{\phi(b)} + \int_a^b\big(p|\phi'|^2 + q|\phi|^2\big);
\end{equation}
cf. \eqref{e:integration} with $x=a$. The assumption \eqref{arthur} then implies the estimate
$$
\Vert\phi^\prime\Vert^2_{p,2}\leq (p\phi')(b)\ol{\phi(b)} - (p\phi')(a)\ol{\phi(a)} - \int_a^b q|\phi|^2
$$
and hence the estimate $\Vert\phi^\prime\Vert^2_{p,2}\leq\alpha\Vert\phi\Vert_\infty^2$ in \eqref{edinburgh} remains valid.
Thus the rest of the proof of Lemma~\ref{lemest1} holds also under the present assumptions on $\phi$ and yields the
estimates for $\Vert\phi^\prime\Vert_{p,2}$ and $\Vert\phi\Vert_\infty$.
\end{proof}

\end{document}